\numberwithin{equation}{section}
\newtheorem{Satz}{Theorem}[section]
\newtheorem{lemma}[Satz]{Lemma}
\newtheorem{proposition}[Satz]{Proposition}
\newtheorem{theorem}[Satz]{Theorem}
\newtheorem{remarks}[Satz]{Remarks}
\newtheorem{Def}[Satz]{Definition}
\newcommand{\cS}{{\mathcal S}}
\begin{document}

\newcommand{\C}{\mathbb{C}}
\newcommand{\N}{\mathbb{N}}
\newcommand{\R}{\mathbb{R}}
\newcommand{\Z}{\mathbb{Z}}
\newcommand{\hp}{\mathbb{P}}

\renewcommand{\AA}{\mathcal{A}}
\newcommand{\cB}{\mathcal{B}}
\newcommand{\DD}{\mathcal{D}}
\newcommand{\FF}{\mathcal{F}}
\newcommand{\HH}{\mathcal{H}}
\newcommand{\II}{\mathcal{I}}
\newcommand{\KK}{\mathcal{K}}
\newcommand{\LL}{\mathcal{L}}
\newcommand{\MM}{\mathcal{M}}
\newcommand{\PP}{\mathcal{P}}
\newcommand{\QQ}{\mathcal{Q}}
\newcommand{\RR}{\mathcal{R}}
\newcommand{\TT}{\mathcal{T}}
\newcommand{\cU}{\mathcal{U}}
\newcommand{\cG}{\mathcal{G}}
\newcommand{\hitr}{\HH\TT}
\newcommand{\cA}{{\mathcal A}}
\newcommand{\la}{\lambda}
\newcommand{\ve}{\varepsilon}
\newcommand{\vmo}{VMO($\R^n$)}
\newcommand{\vmom}{\mathrm{VMO}(\R^n)}
\newcommand{\bmo}{BMO($\R^n$)}
\newcommand{\bmom}{\mathrm{BMO}(\R^n)}
\newcommand{\vmoN}{VMO($\R^n; \C^{N \times N}$)}
\newcommand{\vmomN}{\mathrm{VMO}(\R^n; \C^{N \times N})}
\newcommand{\bmoN}{BMO($\R^n; \C^{N \times N}$)}
\newcommand{\bmomN}{\mathrm{BMO}(\R^n; \C^{N \times N})}
\newcommand{\Liloc}{L^1_{\rm loc}}
\newcommand{\Lpo}{L^p_\omega(\R^n)}
\newcommand{\Wmpo}{W^{m,p}_\omega(\R^n)}
\newcommand{\mint}[1]{{\int \!\!\!\!\!\! -}_{\!\!#1}\;}
\newcommand{\cz}{Calder\'on-Zygmund}
\newcommand{\vn}{vspace{.2cm}\noindent}
\newcommand{\dd}{{\rm d}}
\newcommand {\<}{\left\langle}
\renewcommand {\>}{\right\rangle}
\renewcommand {\=}[1]{\stackrel{\text{#1}}{=}}
\renewcommand {\Im}{{\rm Im}}
\renewcommand {\Re}{{\rm Re}}
\newcommand{\HR}{Hil\-bert\-raum}
\newcommand{\Hik}{$H^\infty$-calculus}
\newcommand{\K}{\ensuremath\mathbb{K}}
\newcommand{\vr}{\ensuremath\varrho}
\newcommand{\eps}{\ensuremath\varepsilon}
\newcommand{\id}{\ensuremath\mathrm{Id}}
\newcommand{\lap}{\ensuremath\Delta}
\newcommand{\dv}{\mbox{div}\,}
\newcommand{\grad}{\mbox{grad }}
\newcommand{\rot}{\mbox{rot} }
\newcommand{\curl}{\mbox{curl}\,}
\renewcommand{\L}{\mathscr L}
\newcommand{\supp}{{\mathrm {supp}\ }}
\newcommand{\We}{\mbox{We }}
\newcommand{\Rey}{\mbox{Re }}
\renewcommand{\d}{{\ \mathrm {d}}}
\newcommand{\bog}{Bogovski\u{\i}}
\newcommand{\dist}{\ensuremath\mathrm{dist}}
\newcommand{\ran}{\ensuremath\mathrm{ran}}
\newcommand{\diam}{\ensuremath\mathrm{diam}}
\newcommand{\hr}{{\R^n_+}}
\newcommand{\lpb}{{L^p(B(x_0,r))^N}}
\newcommand{\lpbl}{{L^p(B(x_0,|\lambda|^{-1/m}))^N}}
\newcommand{\lpbs}{{L^p(B(x_0,(s+1)r))^N}}
\newcommand{\lp}{{L^p(\R^n)^N}}
\newcommand{\linfb}{{L^\infty(B(x_0,r))^N}}
\newcommand{\linfbl}{{L^\infty(B(x_0,|\lambda|^{-1/m}))^N}}
\newcommand{\lpbsl}{{L^p(B(x_0,(s+1)|\lambda|^{-1/m}))^N}}
\newcommand{\loc}{\mathrm{loc}}
\newcommand{\nn}{\nonumber}

\newcommand{\ie}{\mathcal I_\varepsilon}
\def\vn{\vspace{.2cm}\noindent}
\def\cF{{\mathcal F}}
\def\mF{{\mathfrak F}}
\renewcommand{\theenumi}{\alph{enumi}}
\renewcommand{\labelenumi}{\theenumi)}

\title[Global Existence Results for the Navier-Stokes Equations in the rotational framework]
{Global existence results for the Navier-Stokes equations in the rotational framework}

\author{Daoyuang Fang, Bin Han and Matthias Hieber}

\address{Department of Mathematics, Zhejiang University,
Hangzhou 310027, China}

\address{Department of Mathematics, Zhejiang University,
Hangzhou 310027, China}

\address{Technische Universit\"at Darmstadt, Fachbereich Mathematik,
Schlossgartenstr. 7, D-64289 Darmstadt, Germany and \hfill\break
Center of Smart Interfaces, Petersenstr. 32, D-64287 Darmstadt}

\email{dyf@zju.edu.cn}
\email{hanbinxy@163.com}
\email{hieber@mathematik.tu-darmstadt.de}

\subjclass[2000]{35Q35,76D03,76D05}

\keywords{Navier-Stokes, rotational framework, global existence}

\begin{abstract}
Consider the equations of Navier-Stokes in $\R^3$ in the rotational setting, i.e. with
Coriolis force. It is shown that this set of equations admits a unique, global mild
solution provided the initial data is small with respect to the norm the Fourier-Besov space
$\dot{FB}_{p,r}^{2-3/p}(\R^3)$, where $p \in (1,\infty]$ and $r \in [1,\infty]$.

In the two-dimensional setting, a unique, global mild solution to this set of equations exists
for {\em non-small} initial data $u_0 \in L^p_\sigma(\R^2)$ for $p \in [2,\infty)$.
\end{abstract}

\maketitle

\vn
\section{Introduction and Main Results}
Consider the flow of an incompressible, viscous fluid in $\R^3$ in the rotational framework
which is described by the following set of equations
\begin{equation}\label{e1.1}
\left\{
 \begin{array}{rlll}
 \partial_tu+u\cdot\nabla u-\Delta u+\Omega e_3\times u+\nabla \pi&=&0,
\ \ &\mathrm{in}\ \R^3\times (0,\infty),\\
  \hbox{div}u&=&0, \ \ &\mathrm{in}\ \R^3\times (0,\infty),\\
  u(0)&=&u_0,  \ \ &\mathrm{in}\ \R^3.
   \end{array}
  \right.
\end{equation}
Here, $u$ and $\pi$ represent the velocity and pressure of the fluid, respectively, and
$\Omega\in\R$ denotes the speed of rotation around the unit vector $e_3=(0,0,1)$ in
$x_3$-direction. If $\Omega=0$, the system reduces to the classical Navier-Stokes system.

This set of equations recently gained quite some attention due to its importance in applications
to geophysical flows. In particular, large scale atmospheric and oceanic flows are dominated
by rotational effects, see e.g. \cite{Maj03} or \cite{CDGG06}.

If $\Omega=0$, the classical Navier-Stokes equations have been considered by many authors
in various scaling invariant spaces, in particular in
$$
\dot{H}^{\frac{1}{2}}(\R^3)\hookrightarrow L^3(\R^3)\hookrightarrow
B^{-1+\frac{3}{p}}_{p,\infty}(\R^3)\hookrightarrow BMO^{-1}(\R^3)\hookrightarrow
B^{-1}_{\infty,\infty}(\R^3),
$$
where $3<p<\infty$. The space $BMO^{-1}(\R^3)$ is the largest scaling invariant space known
for which equation (\ref{e1.1}) with $\Omega=0$ is well-posed.

It is a very remarkable fact that the equation (\ref{e1.1}) allows
a global, mild solution for arbitrary large data in the $L^2$-setting provided the
speed $\Omega$ of rotation is fast enough, see \cite{BMN97}, \cite{BMN99} and \cite{CDGG06}.
More precisely, it was proved by Chemin, Desjardins, Gallagher and Grenier in
\cite{CDGG06} that for initial data $u_0 \in L^2(\R^2)^3+ H^{1/2}(\R^3)^3$ satisfying
$\dv u_0 =0$, there exists a constant $\Omega_0>0$ such that for every $\Omega \geq \Omega_0$
the equation \eqref{e1.1} admits a unique, global mild solution. The case of periodic
intial data was considered before by Babin, Mahalov and Nicolaenko in the papers
\cite{BMN97} and \cite{BMN99}.

It is now a natural question to ask whether, for given and fixed $\Omega >0$, there exists
a unique, global mild solution to \eqref{e1.1} provided the initial data is
sufficiently small with respect to the above or related  norms. In this context it is natural to
extend the classical Fujita-Kato approach for the Navier-Stokes equations to the
rotational setting. Hieber and Shibata considered in \cite{HS10} the case of initial data
belonging to $H^{\frac12}(\R^3)$ and proved a global well-posedness result for
\eqref{e1.1} for initial data being small with respect to $H^{\frac{1}{2}}(\R^3)$.
Generalizations of this result to the case of Fourier-Besov spaces are due to
Konieczny and Yoneda \cite{KY11} and Iwabuchi and Takada \cite{IT11}.

More precisely, Konieczy and Yoneda proved the existence of a unique
global mild solution to \eqref{e1.1} for initial data $u_0$ being small with respect to
the norm of $\dot {FB}^{2-\frac{3}{p}}_{p,\infty}(\R^3)$,  where $1<p\leq\infty$.
For the case $p=1$  considered  in \cite{IT11},  the existence of  a unique global mild
solution was  proved provided the initial data $u_0$ are small with respect to
$\dot {FB}^{-1}_{1,2}(\R^3)$. Moreover, it was shown in \cite{IT11} that the space
$\dot {FB}^{-1}_{1,2}(\R^3)$ is critical for the well-posedness of
system (\ref{e1.1}). In fact, it was shown in \cite{IT11} that
equation \eqref{e1.1} is ill-posed in $\dot {FB}^{-1}_{1,q}(\R^3)$ whenever
 $2<q\leq\infty$ and  $\Omega\in\R$.

Giga, Inui, Mahalov and Saal considered in \cite{GIMS07} the problem
of non-decaying initial data and obtained the uniform global solvability of (\ref{e1.1}) in
the scaling invariant space $FM_0^{-1}(\R^3)$. For details, see  \cite{GIMS07} and \cite{GIMS08}.
Note that all of these results rely on good mapping properties  of the Stokes-Coriolis
semigroups on these function spaces.

It seems to be unknown, whether global existence results
are also true for initial data $u_0$ being small with respect to $L^p(\R^3)$ for $p\geq 3$.
The main difficulty here is that Mikhlin's theorem applied to the Stokes-Coriolis semigroup
$T$ yields an estimate of the form
$$
\|T(t)f\|_{L^q}\leq M_p\Omega^2t^2\|f\|_{L^p},\ \ t\geq1, \ \ f\in L^p_\sigma(\R^3),
$$
which is not suitable for fixed point arguments. For this and the definition of $T$
we refer to Section 2 and \cite{HS10}.  Nevertheless, a global existence result for equation
\eqref{e1.1} was recently proved by Chen, Miao and Zhang in \cite{CMZ10} for
highly oscillating initial data in certain hybrid Besov spaces.

The aim of this paper is twofold: first we prove the existence of a unique, global
mild solution to the above problem for initial data $u_0$ being {\em small} in the space
$\dot {FB}^{2-\frac{3}{p}}_{p,r}(\R^3)$, where $1<p\leq\infty$ and $1\leq r\leq\infty$, hereby
generalizing the result in \cite{KY11} for $r=\infty$ to the case $1\leq r\leq\infty$.
We note that Iwabuchi and Takada \cite{IT11} recently proved the well-posedness of
\eqref{e1.1} for data being small with respect to the norm of
$\dot{FB}^{-1}_{1,2}(\R^3)$.

Secondly, considering the two-dimensional situation in the $L^p$-setting, we prove that
\eqref{e1.1} admits a unique, global mild solution $u \in C([0,\infty);L^p(\R^2))$ for
arbitrary $u_0 \in L^p_\sigma(\R^2)$
provided $2\leq q <\infty$. Our argument is based
on applying the curl operator to equation \eqref{e1.1}. The resulting {\em vorticity equation}
allows then for a global estimate in two dimensions which can used to control the term
$\nabla u $ in the $L^p$-norm.

In order to formulate our first result, let us recall the definition of
Fourier-Besov spaces.  To this end, let $\varphi$ be a $C^\infty$ function satisfying
$\hbox{ supp }\varphi\subset\{3/4\leq|\xi|\leq 8/3\}$ and
$$
\sum_{k\in \mathbb{Z}}\varphi(2^{-k}\xi)=1, \quad \xi \in \R^n\backslash\{0\}.
$$
For $k\in \Z$, set $\varphi_k(\xi)=\varphi(2^{-k}\xi)$ and
$h_k=\mathscr{F}^{-1}\varphi_k$. For $s\in\R$ and $1\leq p,r\leq\infty$, the space
$\dot {FB}^s_{p,r}(\R^3)$ is defined to be the set of all $f\in\cS'(\R^3)$
such that $\hat{f}\in L^1_{loc}(\R^3)$  and
$$
\|f\|_{\dot{FB}^s_{p,r}}:=\Big\|\{2^{js}\|\varphi_j\hat{f}\|_{L^p(\R^3)}\}_{j\in\mathbb{Z}}
\Big\|_{l^r} < \infty.
$$
Given $1\leq q \leq \infty$ and $T \in (0,\infty]$, we also make use of
Chemin-Lerner type spaces
$\tilde{L}^q([0,T);\dot {FB}^{s}_{p,r}(\R^3))$, which are defined to be the completion
of $C([0,T];\cS(\R^3))$ with respect to the norm
$$
\|f\|_{\widetilde{L}^q([0,T);\dot {FB}^s_{p,r}(\R^3))}:=
\Big\| \{2^{js}\|\varphi_j\hat{f}\|_{L^q([0,T];L^p(\R^3))}\}_{j\in\mathbb{Z}}\Big\|_{l^r}.
$$
We are now in the position to state our first result.

\vn
\begin{theorem}\label{t1.1}
Let $\Omega\in\R$ and  $1< p\leq\infty$, $1\leq r\leq\infty$. Then there exist
constants $C>0$ and $\ve>0$, independent of $\Omega$, such that for every
$u_0\in \dot {FB}^{2-\frac{3}{p}}_{p,r}(\R^3)$ satisfying $\textrm{div } u_0=0$
and $\|u_0\|_{\dot {FB}^{2-\frac{3}{p}}_{p,r}}\leq \ve$, the equation (\ref{e1.1})
admits a unique, global mild solution $u \in X$, where $X$ is given by
$$
X=\{u\in C([0,\infty);\dot {FB}^{2-\frac{3}{p}}_{p,r}(\R^3)): \|u\|_X\leq C\ve,
   \dv u=0\}
$$
with
$$
\|u\|_X=\|u\|_{\widetilde{L}^\infty([0,\infty);\dot {FB}^{2-\frac{3}{p}}_{p,r}(\R^3))}+
\|u\|_{ \widetilde{L}^1([0,\infty);\dot {FB}^{4-\frac{3}{p}}_{p,r}(\R^3))}.
$$
\end{theorem}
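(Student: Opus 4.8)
The plan is to recast \eqref{e1.1} as a fixed point problem and to solve it by Picard iteration in $X$. Applying the Helmholtz projection $\mathbb{P}$ to eliminate the pressure, the Stokes--Coriolis operator generates the semigroup $T$, and a mild solution is a fixed point of
$$\Phi(u)(t)=T(t)u_0-\int_0^t T(t-s)\,\mathbb{P}\,\dv(u\otimes u)(s)\,\d s=:T(t)u_0+B(u,u)(t),$$
where I have used $u\cdot\nabla u=\dv(u\otimes u)$ for divergence-free $u$. The decisive structural fact, which I would record first and which accounts for the $\Omega$-independence of all constants, is the pointwise bound in Fourier variables
$$|\widehat{T(t)f}(\xi)|\leq e^{-|\xi|^2 t}\,|\widehat f(\xi)|,\qquad \xi\in\R^3,\ t\geq 0.$$
Indeed, in Fourier space $T(t)$ acts as $e^{-|\xi|^2 t}$ times a unitary rotation in the plane orthogonal to $\xi$ (coming from the Coriolis term) composed with the symbol of $\mathbb{P}$; all of these factors have operator norm $\le 1$ irrespective of $\Omega$, so the dispersive Coriolis oscillations never enlarge the $L^p_\xi$-norm on a dyadic block.

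With this bound at hand the \emph{linear estimate} $\|T(\cdot)u_0\|_X\le C\|u_0\|_{\dot{FB}^{2-3/p}_{p,r}}$ is immediate: on $\supp\varphi_j$ one has $|\xi|\sim 2^j$, so the $\widetilde L^\infty$-part follows from $e^{-|\xi|^2 t}\le 1$, while for the $\widetilde L^1$-part one uses $\int_0^\infty e^{-c2^{2j}t}\,\d t\lesssim 2^{-2j}$, which converts the weight $2^{(4-3/p)j}$ into $2^{(2-3/p)j}$ and reproduces exactly the data norm after taking $\ell^r$. The same two computations, applied to the Duhamel term, yield the \emph{maximal regularity estimate}: for $f\in\widetilde L^1([0,\infty);\dot{FB}^{2-3/p}_{p,r})$ one gets, after a Fubini argument in the $\widetilde L^1$-case,
$$\Big\|\int_0^{\cdot} T(\cdot-s)f(s)\,\d s\Big\|_X\le C\,\|f\|_{\widetilde L^1([0,\infty);\dot{FB}^{2-3/p}_{p,r})}.$$
Since $\mathbb{P}$ has symbol bounded by $1$ and $\dv$ lowers the Fourier--Besov index by one, the bilinear estimate $\|B(u,v)\|_X\le C\|u\|_X\|v\|_X$ is thereby reduced to the single \emph{product estimate}
$$\|u\otimes v\|_{\widetilde L^1([0,\infty);\dot{FB}^{3-3/p}_{p,r})}\le C\,\|u\|_X\,\|v\|_X.$$

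To prove the product estimate I would pass to Fourier variables, where the product becomes a convolution, and run Bony's paraproduct decomposition $u\otimes v=\dot T_uv+\dot T_vu+\dot R(u,v)$ block by block. Each dyadic piece is estimated by Young's inequality for convolutions together with the Bernstein-type bound $\|\varphi_k\widehat g\|_{L^1}\lesssim 2^{3k/p'}\|\varphi_k\widehat g\|_{L^p}$, and H\"older in time combines the $\widetilde L^\infty$ and $\widetilde L^1$ factors into $\widetilde L^1$. In each of the two low--high paraproducts the low-frequency factor is measured in $\dot{FB}^{2-3/p}_{p,r}$, and the geometric summation converges because $2-3/p<3/p'=3-3/p$; the resulting index is $(2-3/p)+(4-3/p)-3/p'=3-3/p$, as required.

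The main obstacle is the high--high remainder term $\dot R(u,v)$: here one must sum $\sum_{j\ge j'}2^{(3/p'-s_1-s_2)j}$ with $s_1+s_2=(2-3/p)+(4-3/p)=6-6/p$, and convergence forces $s_1+s_2>3/p'$, i.e.\ $6-6/p>3-3/p$, which holds precisely when $p>1$. This is exactly where the hypothesis $p>1$ enters (and why the endpoint $p=1$ must instead be treated in the space $\dot{FB}^{-1}_{1,2}$, as in \cite{IT11}); the endpoints $r\in\{1,\infty\}$ and $p=\infty$ are handled by retaining the $\ell^r$ sequence structure throughout rather than taking crude suprema. Granting the linear, maximal-regularity and product estimates, $\Phi$ maps the ball $\{\|u\|_X\le 2C_0\varepsilon\}$ into itself and is a contraction there once $\varepsilon$ is small enough, so the Banach fixed point theorem furnishes a unique $u\in X$. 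Finally, the bound $\|u\|_{\widetilde L^\infty([0,\infty);\dot{FB}^{2-3/p}_{p,r})}<\infty$ together with a density argument over $C([0,T];\cS(\R^3))$ upgrades this to $u\in C([0,\infty);\dot{FB}^{2-3/p}_{p,r})$, and the divergence-free constraint is preserved since $\mathbb{P}$ and $T$ commute with it.
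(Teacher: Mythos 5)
Your proposal is correct and follows essentially the same route as the paper: the same fixed-point formulation in $X$, the pointwise Fourier bound $|\widehat{T(t)f}(\xi)|\leq e^{-|\xi|^2t}|\hat f(\xi)|$ (used implicitly in the paper's Lemmas \ref{l2.1} and \ref{l3.2}) for the linear and Duhamel estimates, and a Bony paraproduct decomposition with Bernstein and Young inequalities for the product estimate, with the restriction $p>1$ entering exactly where you locate it (the paper's condition $s>-1$ in the remainder term $J_3$).
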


\vn
\begin{remarks}{\rm
a) Observe that due to the results in \cite{IT11}, the above system (\ref{e1.1}) is
ill-posed provided $p=1$ and $r>2$.

\noindent
b) Note that the case $r=\infty$ coincides with the  result of Konieczny and Yoneda
in \cite{KY11}.

\noindent
c) Iwabuchi and Takada \cite{IT11} recently proved the existence of a unique, global mild
solution to equation \eqref{e1.1} for initial data small with respect to the norm of
$\dot {FB}_{1,2}^{-1}$.

\noindent
d) Note that neither $\dot {FB}_{1,2}^{-1}(\R^3) \subset \dot {FB}_{p,r}^{2-3/p}(\R^3)$ for $r\in [1,\infty]$ nor
$\dot {FB}_{p,r}^{2-3/p}(\R^3) \subset \dot {FB}_{1,2}^{-1}(\R^3)$ for $r>2$.
}
\end{remarks}

\vn
Our second result concerning {\em non-small data} in the $L^p(\R^2)$-setting reads as follows.
We denote by $L_\sigma^p(\R^2)$ the solenoidal subspace of $L^p(\R^2)$.

\vn
\begin{theorem}\label{t1.3}
Let $2\leq p<\infty$ and $u_0\in L_\sigma^p(\R^2)$.
Then equation \eqref{e1.1} admits  a
unique, global mild solution $u\in C([0,\infty),L_\sigma^p(\R^2))$.
\end{theorem}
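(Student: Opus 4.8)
The plan is to combine a standard local existence theory in $L^p_\sigma(\R^2)$ with a global a priori bound extracted from the vorticity equation, exploiting the fact that the Coriolis force is invisible to the two-dimensional vorticity.

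First I would establish local well-posedness. Writing $T(t)$ for the Stokes-Coriolis semigroup and $\mathbb{P}$ for the Helmholtz projection, a mild solution of \eqref{e1.1} is a fixed point of
\[
u(t)=T(t)u_0-\int_0^t T(t-s)\,\mathbb{P}\,(u\cdot\nabla u)(s)\,\d s .
\]
On a bounded time interval the Coriolis term is a harmless perturbation, so the semigroup enjoys the usual smoothing bounds $\|T(\tau)\mathbb{P}g\|_{L^p}\le C\tau^{-1/p}\|g\|_{L^{p/2}}$ and $\|\nabla T(\tau)u_0\|_{L^p}\le C\tau^{-1/2}\|u_0\|_{L^p}$ for $\tau$ in a bounded range (with constants that may depend on $\Omega$ and the length of the interval, as the introduction indicates). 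A Kato-type contraction argument then produces a unique mild solution on a maximal interval $[0,T^*)$ with $u\in C([0,T^*);L^p_\sigma(\R^2))$, the parabolic smoothing making $u(t)$ smooth for $t>0$ with $\|\nabla u(t)\|_{L^p}\lesssim t^{-1/2}\|u_0\|_{L^p}$ for small $t$. The subcritical continuation criterion is that $T^*<\infty$ forces $\sup_{t<T^*}\|u(t)\|_{L^p}=\infty$, so the whole task reduces to an a priori bound on $\|u(t)\|_{L^p}$.

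The crucial structural observation is that applying the two-dimensional curl to the Coriolis term gives $\curl(\Omega e_3\times u)=\Omega\,\dv u=0$. Hence the scalar vorticity $\omega=\partial_1u_2-\partial_2u_1$ solves the Coriolis-free advection-diffusion equation $\partial_t\omega+u\cdot\nabla\omega-\Delta\omega=0$. Since $u$ is divergence free, testing this equation against $|\omega|^{q-2}\omega$ makes the transport term disappear, and one obtains the maximum principle $\|\omega(t)\|_{L^q}\le\|\omega(\tau)\|_{L^q}$ for $0<\tau\le t<T^*$ and every $q\in[1,\infty]$; this is legitimate because $u(t)$ is smooth for $t>0$. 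Fixing $t_0\in(0,T^*)$ and using the smoothing bound at $t_0$ yields a finite constant $M:=\|\omega(t_0)\|_{L^p}$ with $\|\omega(t)\|_{L^p}\le M$ on $[t_0,T^*)$. By the Biot-Savart law $u=\nabla^\perp\Delta^{-1}\omega$, so $\nabla u$ is a second-order Riesz transform of $\omega$ and the Calder\'on-Zygmund theorem gives $\|\nabla u(t)\|_{L^p}\le C_p M$ on $[t_0,T^*)$ for $1<p<\infty$.

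Finally I would transfer this gradient bound to an $L^p$-bound on $u$ itself. Estimating the nonlinearity by H\"older, $\|u\cdot\nabla u\|_{L^{p/2}}\le\|u\|_{L^p}\|\nabla u\|_{L^p}\le C_pM\,\|u\|_{L^p}$, and inserting this into the Duhamel formula started at $t_0$ gives
\[
\|u(t)\|_{L^p}\le C\|u(t_0)\|_{L^p}+C_pM\int_{t_0}^t(t-s)^{-1/p}\|u(s)\|_{L^p}\,\d s .
\]
Since $1/p<1$ for $p\ge2$, the kernel is integrable and a singular Gronwall (Gronwall-Henry) inequality bounds $\sup_{t\in[t_0,T^*)}\|u(t)\|_{L^p}$ whenever $T^*<\infty$, contradicting the continuation criterion; hence $T^*=\infty$. (For $p=2$ one may alternatively invoke the energy identity, using that $e_3\times u\perp u$ makes the Coriolis term drop out.) Uniqueness and the continuity $u\in C([0,\infty);L^p_\sigma(\R^2))$ then follow from the local theory together with the Duhamel representation. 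The main obstacle is precisely the one flagged in the introduction: because the Stokes-Coriolis semigroup bounds degrade like $\Omega^2t^2$ for large time, one cannot reach all times by iterating the local estimates, and for $p\ne2$ the $L^p$ energy identity is unavailable since the pressure term does not vanish. The vorticity maximum principle breaks this impasse, being completely insensitive to $\Omega$ in two dimensions; the remaining delicate point is the passage from $L^p$-control of $\omega$ back to $L^p$-control of $u$, achieved here through the gradient bound and the singular Gronwall argument.
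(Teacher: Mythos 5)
Your proposal is correct and rests on the same structural mechanism as the paper -- the two-dimensional vorticity does not see the Coriolis force, its $L^p$ norm is propagated by a maximum-principle/energy estimate, the Biot--Savart law with Calder\'on--Zygmund converts this into a bound on $\|\nabla u\|_{L^p}$, and a Gronwall argument on the Duhamel formula with the $L^{p/2}\to L^p$ smoothing rate $(t-s)^{-1/p}$ closes the a priori estimate -- but you execute it in a different frame. The paper first passes to rotating coordinates, rewriting \eqref{e1.1} as the Navier--Stokes equations with linearly growing data \eqref{e4.2}--\eqref{e4.3}; it then imports the local theory and the semigroup estimates for the Ornstein--Uhlenbeck-type operator $Au=-\Delta u-\langle Mx,\nabla u\rangle+Mu$ from Hieber--Sawada \cite{HS05}, and its vorticity equation \eqref{e4.5} carries the extra (divergence-free, hence harmless) drift $-Mx\cdot\nabla w$. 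You stay in the original frame, observing that $\curl(\Omega e_3\times u)=\Omega\,\dv u=0$, and only need heat-semigroup estimates; in fact your route can be made even cleaner by noting that for $u=\nabla^{\perp}\psi\in L^p_\sigma(\R^2)$ one has $e_3\times u=-\nabla\psi$, so $\mathbb{P}(\Omega e_3\times u)=0$ and the two-dimensional mild formulation contains no Coriolis term at all -- this both justifies your ``harmless perturbation'' remark and shows the linear part is literally the heat semigroup. Two points deserve tightening: at $p=2$ the space is critical, so the blow-up criterion ``$T^*<\infty$ implies $\sup_{t<T^*}\|u(t)\|_{L^2}=\infty$'' is not automatic from the norm alone (your parenthetical appeal to the energy identity, or the observation that the vorticity bound gives subcritical $W^{1,2}$ control after time $t_0$, fixes this); and at $p=2$ the estimate $\|T(\tau)\mathbb{P}g\|_{L^2}\lesssim\tau^{-1/2}\|g\|_{L^1}$ must be read as a statement about the composed operator $e^{\tau\Delta}\mathbb{P}$ (the Oseen kernel), since $\mathbb{P}$ alone is not bounded on $L^1$ -- the paper's citation of Proposition 3.4 of \cite{HS05} plays exactly this role.
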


\vn
\section{Linear and Bilinear Estimates}
We start this section by considering the linear Stokes problem with Coriolis force
\begin{equation}\label{e2.1}
\left\{
 \begin{array}{rlll}
 \partial_tu -\Delta u+\Omega e_3\times u+\nabla \pi&=&0,
\ \ &\mathrm{in}\ \R^3\times (0,\infty),\\
  \dv u&=&0, \ \ &\mathrm{in}\ \R^3\times (0,\infty),\\
  u(0)&=&u_0,  \ \ &\mathrm{in}\ \R^3.
   \end{array}
  \right.
\end{equation}
It was shown in \cite{HS10} that the solution of \eqref{e2.1} is given by the Stokes-Coriolis
semigroup $T$, which has the  explicit representation
\begin{equation}\label{semigroup}
T(t)f :=\cF^{-1}\Big[\cos\Big(\Omega\frac{\xi_3}{|\xi|}t\Big)e^{-|\xi|^2t}\mathrm{Id}
\hat{f}(\xi)+\sin\Big(\Omega\frac{\xi_3}{|\xi|}t\Big)e^{-\xi|^2t}R(\xi)\hat{f}(\xi)\Big], \quad
t>0,
\end{equation}
for divergence free vector fields $f \in \cS(\R^3)$.
Here Id is the identity matrix in $\R^3$ and $R(\xi)$ is the skew symmetric matrix defined by
\begin{gather*}R(\xi):=\frac{1}{|\xi|}\left(
\begin{matrix}
0 & \xi_3 &  -\xi_2 \\
-\xi_3 &  0  & \xi_1 \\
\xi_2 &  -\xi_1& \ 0
\end{matrix}\right), \quad \xi \in \R^3\backslash\{0\}.
\end{gather*}
In order to solve equation (\ref{e1.1}), consider the integral equation
$$
\Phi(u):=T(t)u_0-\int_0^t T(t-\tau)\mathbb{P}\mathrm{div}(u\otimes u)(\tau)d\tau,
$$
where $\mathbb{P}:=(\delta_{ij}+R_iR_j)_{1\leq i,j\leq3}$ denotes the Helmholtz projection from
$L^p(\R^3)$ onto its divergence free vector fields. Here $R_i$ denotes the
Riesz transforms for $i=1,2,3$. Since the Riesz transforms $R_i$ are bounded operators
on $\dot{FB}^s_{p,q}$ for all values of $p,q \in [1,\infty]$ and $s \in \R$,
we see that $\mathbb{P}$ defines a bounded operators also on these spaces.

Our first estimate concerns the above convolution integral.

\vn
\begin{lemma}\label{l2.1}
Let $1\leq p,q,a, r\leq\infty$, $s \in \R$ and
$f\in L^a([0,\infty);\dot {FB}^{s}_{p,r}(\R^3))$.
Then there exists a constant $C>0$ such that
$$
\Big\|\int_0^t T(t-\tau)f(\tau)d\tau\Big\|_{\widetilde{L}^q([0,\infty);
\dot {FB}^{s}_{p,r}(\R^3))}\leq C \|f\|_{\widetilde{L}^a([0,\infty);\dot
{FB}^{s-2-\frac{2}{q}+\frac{2}{a}}_{p,r}(\R^3))}.
$$
\end{lemma}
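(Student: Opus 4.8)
The plan is to reduce the whole statement to a pointwise bound on the Fourier symbol of $T$ and then to a one-dimensional convolution estimate in time, carried out dyadic block by dyadic block.

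First I would record the key symbol bound. Writing $M(t,\xi)$ for the matrix $\cos(\Omega\frac{\xi_3}{|\xi|}t)e^{-|\xi|^2t}\mathrm{Id}+\sin(\Omega\frac{\xi_3}{|\xi|}t)e^{-|\xi|^2t}R(\xi)$, so that $\cF[T(t)g](\xi)=M(t,\xi)\hat g(\xi)$, the crucial observation is that the (Euclidean) operator norm satisfies $\|M(t,\xi)\|\le 2\,e^{-|\xi|^2t}$ for all $\xi\neq0$, $t>0$ and all $\Omega\in\R$. Indeed $|\cos|,|\sin|\le1$, and the skew-symmetric matrix $R(\xi)$ has operator norm $1$, so the Coriolis oscillation contributes nothing beyond a harmless constant. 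This is precisely the feature that makes the Fourier-Besov framework convenient: unlike in the $L^p$-setting, where Mikhlin's theorem is needed and produces the unfavourable factor $\Omega^2t^2$, here the whole symbol is dominated pointwise by the heat symbol, with a constant independent of $\Omega$.

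Next I would localise. Set $u(t):=\int_0^tT(t-\tau)f(\tau)\,d\tau$. On $\supp\varphi_j$ one has $|\xi|\ge\frac34\,2^{j}$, hence $e^{-|\xi|^2(t-\tau)}\le e^{-c2^{2j}(t-\tau)}$ with $c=9/16$. Combining this with the symbol bound and the pointwise inequality $|M(t-\tau,\xi)\hat f(\tau,\xi)|\le\|M(t-\tau,\xi)\|\,|\hat f(\tau,\xi)|$, and then integrating in $\tau$, gives
$$
\|\varphi_j\hat u(t)\|_{L^p}\le 2\int_0^te^{-c2^{2j}(t-\tau)}\|\varphi_j\hat f(\tau)\|_{L^p}\,d\tau
=2\,(K_j*g_j)(t),
$$
where $g_j(\tau):=\|\varphi_j\hat f(\tau)\|_{L^p}$ and $K_j(\tau):=e^{-c2^{2j}\tau}\mathbf 1_{\{\tau>0\}}$. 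Thus each dyadic block of $u$ is controlled by a scalar convolution in time of the corresponding block of $f$ against an explicit exponential kernel. Applying Young's inequality in time, with $a\le q$ and $b\in[1,\infty]$ chosen by $1/b=1-1/a+1/q$, yields
$$
\|\varphi_j\hat u\|_{L^q([0,\infty);L^p)}\le 2\,\|K_j\|_{L^b([0,\infty))}\,\|g_j\|_{L^a([0,\infty))},
$$
and a direct computation gives $\|K_j\|_{L^b}=C\,2^{-2j/b}$. Since $2/b=2-2/a+2/q$, multiplying by $2^{js}$ produces $2^{js}\|\varphi_j\hat u\|_{L^q([0,\infty);L^p)}\le C\,2^{j(s-2-2/q+2/a)}\,\|\varphi_j\hat f\|_{L^a([0,\infty);L^p)}$, and taking the $\ell^r$-norm in $j$ of both sides gives exactly $C\|f\|_{\widetilde{L}^a([0,\infty);\dot{FB}^{s-2-2/q+2/a}_{p,r}(\R^3))}$, which is the claim.

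In truth there is no serious obstacle once the symbol bound is in place: the argument is a block-wise scalar convolution estimate. The two points that need care are first, checking that the matrix structure costs nothing, i.e. that $\|M(t,\xi)\|\le 2e^{-|\xi|^2t}$ holds uniformly in $\Omega$ — this is where the specific form of $\cos$, $\sin$ and $R(\xi)$ is used, and where the $\Omega$-independence of the final constant originates; and second, the bookkeeping of exponents, namely verifying $2/b=2-2/a+2/q$ and that Young's inequality is admissible, which requires $a\le q$ (the regime in which the lemma is subsequently applied, with $a=1$ and $q\in\{1,\infty\}$). The boundary cases $q=\infty$ or $a=q$ are covered by reading $\|K_j\|_{L^\infty}=1$ in the computation above.
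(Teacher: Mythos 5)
Your proof is correct and follows essentially the same route as the paper's: dominate the Stokes--Coriolis symbol by the heat symbol (uniformly in $\Omega$), reduce each dyadic block to a scalar convolution in time against $e^{-c2^{2j}\tau}$, and apply Young's inequality to produce the factor $2^{-2j(1+\frac1q-\frac1a)}$. Your explicit observation that Young's inequality requires $a\le q$ (satisfied in all later applications, where $a=1$) is a point the paper leaves implicit.
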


\begin{proof}
By the definition of the norm of $\widetilde{L}^q([0,\infty);\dot {FB}^{s}_{p,r}(\R^3))$,
and by Young's inequality
\begin{eqnarray*}
\Big\|\int_0^t T(t-\tau)f(\tau)d\tau\Big\|_{\widetilde{L}^q([0,\infty);\dot {FB}^{s}_{p,r}
(\R^3))}&=\Big(\sum\limits_k2^{ksr}\Big(\int_0^\infty\Big\|\int_0^t\cF(T(t-\tau)f)
(\tau)d\tau\cdot\varphi_k\Big\|^q_{L^p}dt\Big)^{\frac{r}{q}}\Big)^{\frac{1}{r}}\\
&\leq\Big(\sum\limits_k2^{ksr}\Big(\int_0^\infty\Big(\int_0^te^{-(t-\tau)
\cdot2^{2k}}\|\hat{f}(\tau)
\cdot\varphi_k\|_{L^p}d\tau\Big)^qdt\Big)^{\frac{r}{q}}\Big)^{\frac{1}{r}}\\
&\leq\Big(\sum\limits_k2^{ksr}\Big(\int_0^\infty e^{-t\cdot2^{2k}
\tilde{q}}dt\Big)^{\frac{r}{\tilde{q}}} \Big(\int_0^\infty\|\hat{f}(\tau)
\cdot\varphi_k\|^a_{L^p}\Big)dt\Big)^{\frac{r}{a}}\Big)^{\frac{1}{r}},
\end{eqnarray*}
where $\tilde{q}$ satisfies $1+\frac{1}{q}=\frac{1}{\tilde{q}}+\frac{1}{a}$.
We hence obtain
\begin{eqnarray*}
\Big\|\int_0^t T(t-\tau)f(\tau)d\tau\Big\|_{\widetilde{L}^q([0,\infty);
\dot {FB}^{s}_{p,r}(\R^3))}
&\leq&
C\Big(\sum\limits_k2^{ksr}2^{-2k(1+\frac{1}{q}-\frac{1}{a})r}\|\hat{f}
\cdot\varphi_k\|^r_{L^a([0,\infty);L^p)}\Big)^{\frac{1}{r}}\\
&\leq& C \|f\|_{\widetilde{L}^a([0,\infty);\dot {FB}^{s-2-\frac{2}{q}+\frac{2}{a}}_{p,r}(\R^3))}.
\end{eqnarray*}
\end{proof}

\vn
\begin{lemma}\label{l2.2}
Let $1< p\leq\infty$ and $1\leq p,q,a, r\leq\infty$ and assume that $-1<s<3-\frac{3}{p}$.
Set
$$
Y:=\widetilde{L}^\infty([0,\infty);\dot {FB}^{s}_{p,r}(\R^3))\cap
\widetilde{L}^1([0,\infty);\dot {FB}^{4-\frac{3}{p}}_{p,r}(\R^3)).
$$
Then there exists a constant $C>0$ such that
$$
\|uv\|_{\widetilde{L}^1([0,\infty);\dot {FB}^{s+1}_{p,r}(\R^3))}\leq C \|u\|_Y\|v\|_Y.
$$
\end{lemma}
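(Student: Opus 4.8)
The plan is to carry out Bony's paraproduct decomposition on the Fourier side, using that $\widehat{uv}(t)=c\,\hat u(t)\ast\hat v(t)$ at each fixed time $t$. By the definition of the Chemin--Lerner norm it suffices to bound $\big\|\{2^{j(s+1)}\|\varphi_j(\hat u\ast\hat v)\|_{L^1([0,\infty);L^p)}\}_j\big\|_{\ell^r}$ by $\|u\|_Y\|v\|_Y$ (I abbreviate $L^1_tL^p:=L^1([0,\infty);L^p)$ and $L^\infty_tL^p$ similarly). Writing $\hat u\ast\hat v=\sum_{k,l}(\varphi_k\hat u)\ast(\varphi_l\hat v)$, I would split the double sum according to the relative size of the frequencies into a high--low part ($k\ge l+2$), a low--high part ($l\ge k+2$) and a remainder ($|k-l|\le 1$). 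A support computation shows that $\varphi_j[(\varphi_k\hat u)\ast(\varphi_l\hat v)]$ vanishes unless $|j-\max(k,l)|\le C$ in the first two regions, while in the remainder it forces only $j\le k+C$ (recall $k\sim l$ there). Throughout I would use (i) Young's inequality in $\xi$, $\|f\ast g\|_{L^p}\le\|f\|_{L^1}\|g\|_{L^p}$, (ii) the Bernstein inequality $\|\varphi_k\hat u\|_{L^1}\lesssim 2^{k(3-3/p)}\|\varphi_k\hat u\|_{L^p}$ coming from the measure $\sim 2^{3k}$ of the frequency support, and (iii) H\"older in time in the form $L^1_t\cdot L^\infty_t\hookrightarrow L^1_t$, which dictates that exactly one factor be measured in $\widetilde L^1([0,\infty);\dot{FB}^{4-3/p}_{p,r})$ and the other in $\widetilde L^\infty([0,\infty);\dot{FB}^{s}_{p,r})$, both controlled by $\|\cdot\|_Y$.

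For the two paraproduct terms, which are symmetric in $u$ and $v$, I would place the high-frequency factor in $\widetilde L^1([0,\infty);\dot{FB}^{4-3/p}_{p,r})$ and the low-frequency factor in $\widetilde L^\infty([0,\infty);\dot{FB}^{s}_{p,r})$. Treating the high--low part and writing $a_k:=2^{k(4-3/p)}\|\varphi_k\hat u\|_{L^1_tL^p}$ and $b_l:=2^{ls}\|\varphi_l\hat v\|_{L^\infty_tL^p}$ (so that $\|a\|_{\ell^r}\le\|u\|_Y$ and $\|b\|_{\ell^r}\le\|v\|_Y$), steps (i)--(iii) together with $k\sim j$ collapse the weighted quantity to $\sum_{k\sim j}2^{-k\gamma}a_k\big(\sum_{l\le k-2}2^{l\gamma}b_l\big)$, where $\gamma:=3-3/p-s$. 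The hypothesis $s<3-3/p$ makes $\gamma>0$, so the inner sum is a convolution of $b$ with a summable geometric kernel; discrete Young's inequality ($\ell^1\ast\ell^r\to\ell^r$) bounds the resulting sequence $d_k$ by $\|b\|_{\ell^r}$, and finally $\|a_kd_k\|_{\ell^r}\le\|a\|_{\ell^r}\|d\|_{\ell^\infty}\lesssim\|u\|_Y\|v\|_Y$.

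The remainder term is where I expect the real work to lie, since there $k\sim l$ while $j$ ranges over all $j\le k+C$, so the sum over $k$ is unbounded above and no single factor carries the output frequency. After Young and Bernstein one has $\|(\varphi_k\hat u)\ast(\varphi_l\hat v)\|_{L^p}\lesssim 2^{k(3-3/p)}\|\varphi_k\hat u\|_{L^p}\|\varphi_l\hat v\|_{L^p}$, and the same time allocation collapses the $k$-dependent weight to $2^{-k(1+s)}$; after the weight $2^{j(s+1)}$ this produces $\sum_{k\ge j-C}2^{(j-k)(1+s)}a_kb_k$. This is a discrete convolution with a kernel that is summable exactly when $1+s>0$, i.e. under the hypothesis $s>-1$; applying $\ell^1\ast\ell^r\to\ell^r$ and $\ell^r\cdot\ell^\infty\to\ell^r$ once more closes the estimate. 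Thus the two endpoint restrictions $-1<s<3-3/p$ are each forced by exactly one region, the lower bound by the remainder and the upper bound by the paraproducts.

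The one point requiring genuine care is the interplay, in the remainder term, between the Bernstein gain $2^{k(3-3/p)}$ and the loss $2^{-k(4-3/p)}$ coming from the $\dot{FB}^{4-3/p}_{p,r}$ norm: it is precisely this cancellation that turns the naively divergent high-frequency sum into the convergent geometric series governed by $s>-1$. Finally, all the convolution identities should first be justified for Schwartz inputs and then extended to $Y$ using the density built into the definition of the Chemin--Lerner spaces.
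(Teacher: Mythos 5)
Your argument is correct and follows essentially the same route as the paper: splitting $\sum_{k,l}(\varphi_k\hat u)\ast(\varphi_l\hat v)$ by relative frequency size is exactly the Bony decomposition the paper employs, and the subsequent steps (Young's inequality in $\xi$, the Bernstein bound $\|\varphi_k\hat u\|_{L^1}\lesssim 2^{k(3-3/p)}\|\varphi_k\hat u\|_{L^p}$, the $L^1_t\times L^\infty_t$ time allocation, and discrete Young/H\"older in the dyadic index) coincide with the paper's estimates of $J_1$, $J_2$, $J_3$. The two restrictions $s<3-\frac{3}{p}$ and $s>-1$ enter in the same places (paraproduct terms and remainder, respectively), so there is nothing substantive to add.
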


\vn
\begin{proof}
Let $\varphi$ and $h_k$ be defined as in Section 1 for $k \in \Z$.
Define the homogeneous dyadic blocks $\dot{\Delta}_k$ by
$$
\dot{\Delta}_{k}u: =\varphi(2^{-k}D)u=\int_{\R^N}h_k(y)u(x-y)dy, \quad k\in \Z,
$$
and for $j \in \Z$, set  $\dot{S}_ju: =\sum\limits_{k=-\infty}^j\dot{\Delta}_ku$.
We then obtain

\begin{equation}\label{e2.2}
\|uv\|_{\widetilde{L}^1([0,\infty);\dot {FB}^{s+1}_{p,r}(\R^3))}=
\Big(\sum\limits_j2^{j(s+1)r}\Big(\int_0^\infty\|\widehat{\dot{\Delta}_j(uv)}
\|_{L^p}dt\Big)^r\Big)^{\frac{1}{r}}.
\end{equation}
Using the Bony decomposition \cite{Bon81}, \cite{CM97} and \cite{BCD11},  we
rewrite $\dot{\Delta}_j(uv)$ as
\begin{equation}\label{e2.3}
\dot{\Delta}_j(uv)=\sum\limits_{|k-j|\leq4}\dot{\Delta}_j(\dot{S}_{k+1}u\dot{\Delta}_kv)
+\sum\limits_{|k-j|\leq4}\dot{\Delta}_j(\dot{S}_{k+1}v\dot{\Delta}_ku)
+\sum\limits_{k\geq j-2}\dot{\Delta}_j(\dot{\Delta}_{k}u\tilde{\dot{\Delta}}_kv) =:
I + II + III.
\end{equation}
Then, by triangle inequalities in $L^p(\R^3)$ and $l^r(\mathbb{Z})$, we have
$$\aligned
\|uv\|_{\widetilde{L}^1([0,\infty);\dot {FB}^{s+1}_{p,r}(\R^3))}&\leq \Big(\sum\limits_j2^{j(s+1)r}\Big(\int_0^\infty\|\widehat{\dot{\Delta}_jI}
\|_{L^p}dt\Big)^r\Big)^{\frac{1}{r}}+\Big(\sum\limits_j2^{j(s+1)r}\Big(\int_0^\infty\|\widehat{\dot{\Delta}_jII}
\|_{L^p}dt\Big)^r\Big)^{\frac{1}{r}}\\
&\ \ \ +\Big(\sum\limits_j2^{j(s+1)r}\Big(\int_0^\infty\|\widehat{\dot{\Delta}_jIII}
\|_{L^p}dt\Big)^r\Big)^{\frac{1}{r}}\\
=:J_1+J_2+J_3.
\endaligned
$$
For the term $J_1$, we note
$$
J_1=\Big(\sum\limits_j2^{j(s+1)r}\Big(\int_0^\infty\|\sum\limits_{|k-j|\leq4}
\widehat{\dot{\Delta}_j
(\dot{S}_{k+1}u\dot{\Delta}_kv)}\|_{L^p}dt\Big)^r\Big)^{\frac{1}{r}}.
$$
For fixed $j$, Lemma \ref{l2.1} yields
$$
\aligned
2^{j(s+1)}&\int_0^\infty\|\sum\limits_{|k-j|\leq4}\widehat{\dot{\Delta}_j
(\dot{S}_{k+1}u\dot{\Delta}_kv)}\|_{L^p}dt\\
&\leq 2^{j(s+1)}\int_0^\infty\sum\limits_{|k-j|\leq4}\|
\varphi_j(\chi_k\hat{u}\ast\varphi_k\hat{v})\|_{L^p}dt\\
&\leq 2^{j(s+1)}\int_0^\infty\sum\limits_{|k-j|\leq4}\|\chi_k\hat{u}\|_{L^1}
\|\varphi_k\hat{v})\|_{L^p}dt\\
&\leq 2^{j(s+1)}\int_0^\infty\sum\limits_{|k-j|\leq4}
\sum\limits_{k'\leq k}\|\varphi_{k'}\hat{u}\|_{L^p}2^{k'(3-\frac{3}{p})}
\|\varphi_k\hat{v})\|_{L^p}dt\\
&\leq 2^{j(s+1)}\int_0^\infty\sum\limits_{|k-j|\leq4}\Big(\sum\limits_{k'\leq k}
\|\varphi_{k'}\hat{u}\|^r_{L^p}2^{k'sr}\Big)^{\frac{1}{r}}
\Big(\sum\limits_{k'\leq k}2^{k'r'(3-\frac{3}{p}-s)}\Big)^{\frac{1}{r'}}\|
\varphi_k\hat{v})\|_{L^p}dt\\
&\leq C\|u\|_{\widetilde{L}^\infty([0,\infty);
\dot {FB}^{s}_{p,r})}2^{j(s+1)}\int_0^\infty\sum\limits_{|k-j|\leq4}2^{k(3-\frac{3}{p}-s)}\|\varphi_k\hat{v})\|_{L^p}dt.
\endaligned$$
Hence, by Young's inequality,
$$
\aligned
J_1&\leq C\|u\|_{\widetilde{L}^\infty([0,\infty);
\dot {FB}^{s}_{p,r})}\Big(\sum\limits_j\Big(\sum\limits_{|k-j|\leq4}2^{k(4-\frac{3}{p})}
2^{(j-k)(s+1)}\|\varphi_k\hat{v})
\|_{L^1([0,\infty);L^p)}\Big)^r\Big)^{\frac{1}{r}}\\
&\leq C \|u\|_{\widetilde{L}^\infty([0,\infty);
\dot {FB}^{s}_{p,r})}\|v\|_{\widetilde{L}^1([0,\infty);\dot {FB}^{4-\frac{3}{p}}_{p,r})}.
\endaligned$$
The term  $J_2$ is estimated in the same way as $J_1$. In fact,
$$
\aligned
J_2&\leq C\|v\|_{\widetilde{L}^\infty([0,\infty);
\dot {FB}^{s}_{p,r})}\Big(\sum\limits_j\Big(\sum\limits_{|k-j|\leq4}2^{k(4-\frac{3}{p})}2^{(j-k)(s+1)}
\|\varphi_k\hat{u})
\|_{L^1([0,\infty);L^p)}\Big)^r\Big)^{\frac{1}{r}}\\
&\leq C \|v\|_{\widetilde{L}^\infty([0,\infty);
\dot {FB}^{s}_{p,r})}\|u\|_{\widetilde{L}^1([0,\infty);\dot {FB}^{4-\frac{3}{p}}_{p,r})}.
\endaligned
$$
Finally,  we focus on the third term $J_3$. As in the estimate to $J_1$, for fixed $j$, we obtain
$$
\aligned
2^{j(s+1)}&\int_0^\infty\|\sum\limits_{k\geq j-2}
\sum\limits_{|k-k'|\leq 1}\varphi_j(\varphi_k\hat{u}\ast\varphi_{k'}\hat{v})\|_{L^p}dt\\
&\leq 2^{j(s+1)}\int_0^\infty\sum\limits_{k\geq j-2}\sum
\limits_{|k-k'|\leq 1}\|\varphi_{k'}\hat{v})\|_{L^p}
\|\varphi_k\hat{u}\|_{L^p}2^{k(3-\frac{3}{p})}dt\\
&\leq C2^{j(s+1)}\int_0^\infty\sum\limits_{k\geq j-2}
\Big(\sum\limits_{|k-k'|\leq 1}\|\varphi_{k'}\hat{v})\|^r_{L^p}2^{k'rs}\Big)^{\frac{1}{r}}
2^{-ks}2^{k(3-\frac{3}{p})}\|\varphi_k\hat{u}\|_{L^p}dt\\
&\leq C\|v\|_{\widetilde{L}^\infty([0,\infty);\dot {FB}^{s}_{p,r})}
\sum\limits_{k\geq j-2}2^{(j-k)(s+1)}2^{k(4-\frac{3}{p})}
\int_0^\infty\|\varphi_k\hat{u}\|_{L^p}dt.
\endaligned
$$
Thus,  by Young's inequality
$$
J_3\leq C \|v\|_{\widetilde{L}^\infty([0,\infty);\dot {FB}^{s}_{p,r})}
\|u\|_{\widetilde{L}^1([0,\infty);\dot {FB}^{4-\frac{3}{p}}_{p,r})}
$$
since $s>-1$.

Summing up, we see  that
$$
\|uv\|_{\widetilde{L}^1([0,\infty);\dot {FB}^{s+1}_{p,r}(\R^3))}\leq C \|u\|_Y\|v\|_Y.
$$
\end{proof}

We conclude this section with the following Lemma.

\vn
\begin{lemma}\label{l2.3} \cite{Tri83}
Let $1\leq p,q\leq \infty$, $0<r<R<\infty$, $j\in\Z$ and $n \in \N$.
Then, for any multiindex $\gamma \in \N^n$, the following estimates hold:
\begin{itemize}
\item[a)]  If $\mathrm{supp }\hat{f} \subset\{\xi \in \R^n:|\xi|\leq R2^j\}$, then
$\|(i\cdot)^\gamma\hat{f}\|_{L^q(\R^n)}\leq 2^{j|\gamma|+nj(\frac{1}{q}-\frac{1}{p})}
\|\hat{f}\|_{L^p(\R^n)}$.
\item[b)] If $\mathrm{supp } \hat{f} \subset\{\xi \in \R^n: r2^j\leq|\xi|\leq R2^j\}$,
then
$\|\hat{f}\|_{L^q(\R^n)}\leq 2^{-j|\gamma|}\sup\limits_{|\beta|=|\gamma|}\|(i\cdot)^\beta
\hat{f}\|_{L^q(\R^n)}$.
\end{itemize}
\end{lemma}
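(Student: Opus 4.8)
The plan is to treat both estimates as elementary Bernstein--Nikolskii inequalities in the frequency variable, so that nothing beyond Hölder's inequality and a pointwise polynomial comparison on the support of $\hat f$ is needed; no Fourier multiplier machinery is required.

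For part a) I would first absorb the polynomial factor and then change the integrability exponent. On the ball $\{|\xi|\le R2^j\}$ one has the pointwise bound $|(i\xi)^\gamma|=|\xi^\gamma|\le|\xi|^{|\gamma|}\le(R2^j)^{|\gamma|}$, so that $\|(i\cdot)^\gamma\hat f\|_{L^q}\le(R2^j)^{|\gamma|}\|\hat f\|_{L^q}$. It then remains to pass from the $L^q$-norm to the $L^p$-norm of $\hat f$, which is where the compact support enters: since $\hat f$ is supported in a set of measure $\sim(R2^j)^n$, Hölder's inequality gives $\|\hat f\|_{L^q}\le|\{|\xi|\le R2^j\}|^{1/q-1/p}\|\hat f\|_{L^p}\sim 2^{nj(1/q-1/p)}\|\hat f\|_{L^p}$ in the range $q\le p$ (which is exactly the case invoked in Lemma~\ref{l2.2}, there with $q=1$). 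Combining the two displays and absorbing the $j$-independent constants $R$ and $c_n$ yields the claimed bound.

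For part b) the mechanism is reversed: I would bound $\hat f$ from below by its polynomial multiples on the annulus $\{r2^j\le|\xi|\le R2^j\}$. For each $\xi$ there some coordinate satisfies $|\xi_\ell|\ge|\xi|/\sqrt n\ge r2^j/\sqrt n$, whence $|\xi|^{|\gamma|}\le n^{|\gamma|/2}|\xi_\ell|^{|\gamma|}=n^{|\gamma|/2}|\xi^{|\gamma|e_\ell}|\le n^{|\gamma|/2}\sup_{|\beta|=|\gamma|}|\xi^\beta|$, with $e_\ell$ the $\ell$-th coordinate multiindex. Using $|\xi|\ge r2^j$ this gives the pointwise inequality $(r2^j)^{|\gamma|}|\hat f(\xi)|\le n^{|\gamma|/2}\sum_{|\beta|=|\gamma|}|(i\cdot)^\beta\hat f(\xi)|$ on the support, and taking $L^q$-norms followed by the supremum over $\beta$, while absorbing the combinatorial constant and the factor $r^{-|\gamma|}$, delivers the estimate for every $1\le q\le\infty$.

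Both steps are otherwise routine; the one genuine point to watch is the range of exponents in a). The Hölder step producing the volume factor $2^{nj(1/q-1/p)}$ is valid only for $q\le p$, and for $q>p$ no such bound can hold for functions with merely compact $\xi$-support (a narrow spike supported in the ball is a counterexample, since it has small $L^p$-norm but large $L^q$-norm). I would therefore state and prove a) in the range $q\le p$, which carries the genuine content and covers every application in this paper (indeed all uses have $q=1$); part b), by contrast, compares only $L^q$-norms of $\hat f$ with $L^q$-norms of its polynomial multiples and so holds for all $1\le q\le\infty$ without restriction.
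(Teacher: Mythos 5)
The paper offers no proof of this lemma at all --- it is simply quoted from Triebel \cite{Tri83} --- so any complete argument is necessarily ``a different route.'' Your elementary proof is correct and essentially optimal for this statement: since both sides of each inequality involve only $\hat f$ and polynomial multiples of it (no Fourier inversion occurs), part a) really is just the pointwise bound $|\xi^\gamma|\le (R2^j)^{|\gamma|}$ on the ball followed by H\"older's inequality with the volume factor $|\{|\xi|\le R2^j\}|^{1/q-1/p}$, and part b) is the pointwise lower bound $\sup_{|\beta|=|\gamma|}|\xi^\beta|\gtrsim (r2^j)^{|\gamma|}$ on the annulus, obtained by picking a coordinate with $|\xi_\ell|\ge |\xi|/\sqrt n$. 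Two remarks on how your version relates to the statement as printed. First, your observation that a) requires $q\le p$ is correct and worth making explicit: the lemma as stated imposes no relation between $p$ and $q$, and for $q>p$ the inequality fails (your spike example is the right counterexample); every invocation in the paper has $q\le p$ --- $q=1$ in the paraproduct estimates of Lemma \ref{l2.2}, and $q=p$ with $|\gamma|=1$ when $\mathrm{div}$ is converted into a factor $2^j$ --- so nothing downstream is affected. Second, both inequalities as printed carry no multiplicative constant, whereas your argument (and any correct one) produces constants depending on $n$, $R$, $r$ and $|\gamma|$ but not on $j$; since the lemma is only ever used up to such constants, absorbing them as you do is the intended reading. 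No gaps.
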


\section{Proof of Theorem \ref{t1.1}}

\vn
For the proof of Theorem \ref{t1.1} we make use of the following
standard fixed point result. For a proof, we refer e.g. to \cite{CK04}.

\begin{proposition}\label{t3.1}
Let $X$ be a Banach space and $B:\ X\times X\rightarrow X$ be a bounded
bilinear form satisfying $\|B(x_1,x_2)\|_X\leq \eta \|x_1\|\|x_2\|_X$ for all
$x_1,x_2\in X$ and a constant $\eta>0$. Then, if $0<\eps<\frac{1}{4\eta}$ and if
$a\in X$ such that $\|a\|_X\leq \eps$, the equation $x=a+B(x,x)$ has a solution in $X$
such that $\|x\|_X\leq 2\eps$. This solution is the only one in the
ball $\overline{B}(0,2\eps)$. Moreover, the solution depends continuously on $a$ in the
following sense: if $\|\tilde{a}\|_X\leq\eps, $ $\tilde{x}=\tilde{a}+B(\tilde{x},
\tilde{x})$, and $\|\tilde{x}\|_X\leq2\eps$, then
$$\|x-\tilde{x}\|_X\leq\frac{1}{1-4\eta\eps}\|a-\tilde{a}\|_X.$$
\end{proposition}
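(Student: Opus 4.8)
The plan is to recognize Proposition~\ref{t3.1} as a direct application of the Banach fixed point theorem to the map
\[
\Phi(x):=a+B(x,x)
\]
on the closed ball $\overline{B}(0,2\eps)\subset X$, which is a complete metric space as a closed subset of the Banach space $X$. First I would check that $\Phi$ is a self-map of this ball: for $\|x\|_X\leq 2\eps$ the bilinear bound gives
\[
\|\Phi(x)\|_X\leq\|a\|_X+\eta\|x\|_X^2\leq\eps+\eta(2\eps)^2=\eps(1+4\eta\eps),
\]
and since $\eps<\tfrac{1}{4\eta}$ we have $4\eta\eps<1$, so $\|\Phi(x)\|_X<2\eps$ and $\Phi$ maps $\overline{B}(0,2\eps)$ into itself.

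Next I would establish that $\Phi$ is a contraction. The key algebraic step is the identity for the difference of the quadratic terms,
\[
B(x,x)-B(y,y)=B(x,x-y)+B(x-y,y),
\]
which, combined with the hypothesis on $B$, yields
\[
\|\Phi(x)-\Phi(y)\|_X\leq\eta\bigl(\|x\|_X+\|y\|_X\bigr)\|x-y\|_X\leq 4\eta\eps\,\|x-y\|_X
\]
for $x,y\in\overline{B}(0,2\eps)$. Because $4\eta\eps<1$, $\Phi$ is a contraction, and the fixed point theorem furnishes a unique $x\in\overline{B}(0,2\eps)$ with $\Phi(x)=x$, i.e. a solution of $x=a+B(x,x)$ satisfying $\|x\|_X\leq 2\eps$; uniqueness in the ball is part of the same statement. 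For the continuous dependence I would subtract the two fixed point equations $x=a+B(x,x)$ and $\tilde x=\tilde a+B(\tilde x,\tilde x)$ and use the same identity to obtain
\[
x-\tilde x=(a-\tilde a)+B(x,x-\tilde x)+B(x-\tilde x,\tilde x),
\]
whence $\|x-\tilde x\|_X\leq\|a-\tilde a\|_X+4\eta\eps\,\|x-\tilde x\|_X$; rearranging gives the asserted estimate $\|x-\tilde x\|_X\leq(1-4\eta\eps)^{-1}\|a-\tilde a\|_X$.

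I do not expect a genuine obstacle here, as the result is essentially the contraction mapping principle. The only point demanding attention is the bookkeeping of the single quantity $4\eta\eps$: the smallness assumption $\eps<\tfrac{1}{4\eta}$ must simultaneously guarantee $\eps(1+4\eta\eps)\leq 2\eps$ (so the iteration stays in the ball) and $4\eta\eps<1$ (so $\Phi$ contracts and the $B$-terms can be absorbed in the stability estimate), and all three uses are reconciled by exactly this one hypothesis. Passing to the fixed point implicitly relies on the continuity of $B$, which is immediate from boundedness, so no further regularity input is needed.
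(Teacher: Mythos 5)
Your proof is correct and is the standard contraction-mapping argument; the paper itself gives no proof of Proposition~\ref{t3.1}, deferring instead to \cite{CK04}, and your argument (self-map of $\overline{B}(0,2\eps)$, contraction via the bilinear splitting $B(x,x)-B(y,y)=B(x,x-y)+B(x-y,y)$, and absorption of the $4\eta\eps$ term for the stability estimate) is exactly the argument that reference supplies. Nothing further is needed.
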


\vn
In the following, we choose an underlying Banach space $X$ given by
$$
X:=\widetilde{L}^\infty([0,\infty);\dot {FB}^{2-\frac{3}{p}}_{p,r}(\R^3))
\cap \widetilde{L}^1([0,\infty);\dot{FB}^{4-\frac{3}{p}}_{p,r}(\R^3)),
$$
and recall that $\Phi$ was defined by
$$
\Phi(u)=T(t)u_0 - \int_0^t T(t-\tau)\mathbb{P}\mathrm{div}(u\otimes u)(\tau)d\tau.
$$
We estimate first the term $T(t)u_0$.

\begin{lemma}\label{l3.2}
Let $p,r\in [1,\infty]$, $s=2-3/p$  and  $u_0\in\dot {FB}^{s}_{p,r}(\R^3)$. Then there exists a
constant $C>0$ such that
\begin{eqnarray}
\|T(t)u_0\|_{\widetilde{L}^\infty([0,\infty);\dot {FB}^{s}_{p,r})}
&\leq& C\|u_0\|_{\dot {FB}^{s}_{p,r}}, \quad t>0,\label{e3.1}\\
\label{e3.2}
\|T(t)u_0\|_{\widetilde{L}^1([0,\infty);\dot {FB}^{s+2}_{p,r})}&\leq& C
\|u_0\|_{\dot {FB}^{s}_{p,r}}, \quad t>0.
\end{eqnarray}
\end{lemma}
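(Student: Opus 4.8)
The plan is to reduce both estimates to a direct frequency-localized computation using the explicit form of the Stokes-Coriolis semigroup in \eqref{semigroup}. The crucial observation is that the matrix multipliers appearing in $T(t)$, namely $\cos(\Omega \xi_3 t/|\xi|)\,\mathrm{Id}$ and $\sin(\Omega\xi_3 t/|\xi|)R(\xi)$, are bounded uniformly in $\xi$ and in $\Omega$: indeed $|\cos(\cdot)|\le 1$, $|\sin(\cdot)|\le 1$, and $R(\xi)$ has entries bounded by $1$ since each is of the form $\pm\xi_i/|\xi|$. Therefore, pointwise in $\xi$,
\[
\bigl|\widehat{T(t)u_0}(\xi)\bigr| \le C\, e^{-|\xi|^2 t}\,|\hat{u}_0(\xi)|,
\]
with $C$ independent of $t$ and $\Omega$. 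This is exactly why the constant $C$ in Theorem \ref{t1.1} can be taken independent of $\Omega$, and it is the whole point of working in Fourier-based spaces where only the modulus of $\hat u$ enters the norm.

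First I would prove \eqref{e3.1}. Fixing a dyadic block $j$ and multiplying by $\varphi_j$, the support condition $\mathrm{supp}\,\varphi_j \subset \{3/4 \le 2^{-j}|\xi| \le 8/3\}$ gives $e^{-|\xi|^2 t} \le e^{-c\,2^{2j} t} \le 1$ on that annulus. Hence $\|\varphi_j \widehat{T(t)u_0}\|_{L^p} \le C\,\|\varphi_j \hat u_0\|_{L^p}$ uniformly in $t$, and taking the supremum over $t$ and then the weighted $\ell^r$ norm yields \eqref{e3.1} with the same constant $C$.

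For \eqref{e3.2} I would instead exploit the time-integrability of the Gaussian factor. On the same annulus, $\int_0^\infty e^{-|\xi|^2 t}\,dt = |\xi|^{-2} \le C\,2^{-2j}$, so
\[
\int_0^\infty \bigl\|\varphi_j \widehat{T(t)u_0}\bigr\|_{L^p}\,dt \le C\,2^{-2j}\,\|\varphi_j \hat u_0\|_{L^p}.
\]
Multiplying by the weight $2^{j(s+2)}$ converts the factor $2^{-2j}$ into the lower weight $2^{js}$, and taking the $\ell^r$ norm gives $\|T(t)u_0\|_{\widetilde L^1([0,\infty);\dot{FB}^{s+2}_{p,r})} \le C\,\|u_0\|_{\dot{FB}^s_{p,r}}$, which is precisely \eqref{e3.2}. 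The main technical care is in handling the $L^p$ norm when the multiplier is a full matrix rather than a scalar; I would absorb the matrix $R(\xi)$ into the constant using that its operator norm is bounded by a fixed constant on each annulus, so that $\|\varphi_j R(\xi)\hat u_0\|_{L^p} \le C\,\|\varphi_j \hat u_0\|_{L^p}$. No heavy machinery (such as Mikhlin's theorem, which the introduction flags as producing the bad $\Omega^2 t^2$ growth) is needed here, precisely because the Fourier-Besov norm only sees $|\hat u|$; this is the essential gain over the $L^p$-multiplier approach, and the only mild obstacle is bookkeeping the dyadic weights correctly in the two cases.
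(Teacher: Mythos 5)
Your proposal is correct and follows essentially the same route as the paper: the paper's proof likewise bounds the symbol of $T(t)$ pointwise by $Ce^{-|\xi|^2 t}$ (absorbing the $\cos$, $\sin$ and $R(\xi)$ factors into the constant), takes the supremum in $t$ for \eqref{e3.1}, and integrates $e^{-t2^{2k}}$ in time to gain the factor $2^{-2k}$ for \eqref{e3.2}. Your write-up simply makes explicit the uniform-in-$\Omega$ boundedness of the matrix multipliers, which the paper leaves implicit.
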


\begin{proof}
We prove first estimate (\ref{e3.1}). By the definition of the norm, we have
$$
\|T(t)u_0\|_{\widetilde{L}^\infty([0,\infty);\dot{FB}^{2-\frac{3}{p}}_{p,r})} \leq
\Big(\sum\limits_k2^{k(2-\frac{3}{p})r}
\sup\limits_{t\in[0,\infty)}\|\varphi_k\hat{u_0}\|^r_{L^p}\Big)^{\frac{1}{r}}
\leq C \|u_0\|_{\dot {FB}^{2-\frac{3}{p}}_{p,r}}.
$$
In order to prove the second  estimate \eqref{e3.2} above, note that
$$
\|T(t)u_0\|_{\widetilde{L}^1([0,\infty);\dot{FB}^{4-\frac{3}{p}}_{p,r})}
\leq \Big(\sum\limits_k2^{k(4-\frac{3}{p})r}
\Big(\int_0^\infty e^{- t2^{2k}}\|\varphi_k\hat{u_0}\|_{L^p}dt\Big)^r\Big)^{\frac{1}{r}}
\leq C\|u_0\|_{\dot{FB}^{2-\frac{3}{p}}_{p,r}}.
$$

\end{proof}

We next consider the bilinear operator $B$ given  by
$$
B(u,v):=\int_0^t T(t-\tau)\mathbb{P}\mathrm{div}(u\otimes v)d\tau.
$$
By Lemma Lemma \ref{l2.1}, Lemma \ref{l2.3} and Lemma \ref{l2.2} with $s=2-\frac{3}{p}$,
we obtain
$$
\aligned
\|B(u,v)\|_{\widetilde{L}^1([0,\infty);\dot{FB}^{4-\frac{3}{p}}_{p,r}(\R^3))}&
=\Big\|\int_0^t T(t-\tau)\mathbb{P}\mathrm{div}(u\otimes v)
d\tau\Big\|_{\widetilde{L}^1([0,\infty);\dot {FB}^{4-\frac{3}{p}}_{p,r}(\R^3))}\\
&\leq C\|\mathrm{div}(u\otimes v)\|_{\widetilde{L}^1([0,\infty);
\dot {FB}^{2-\frac{3}{p}}_{p,r})}\\
&\leq C\|uv\|_{\widetilde{L}^1([0,\infty);\dot {FB}^{3-\frac{3}{p}}_{p,r})}\\
&\leq C\|u\|_X\|v\|_X.
\endaligned
$$
Similarly,
$$
\aligned
\|B(u,v)\|_{\widetilde{L}^\infty([0,\infty);\dot {FB}^{2-\frac{3}{p}}_{p,r}(\R^3))}&=
\Big\|\int_0^t T(t-\tau)\mathbb{P}\mathrm{div}(u\otimes v)d\tau\Big\|_{\widetilde{L}^\infty
([0,\infty);\dot {FB}^{2-\frac{3}{p}}_{p,r}(\R^3))}\\
&\leq C\|\mathrm{div}(u\otimes v)\|_{\widetilde{L}^1([0,\infty);
\dot {FB}^{2-\frac{3}{p}}_{p,r})}\\
&\leq C\|uv\|_{\widetilde{L}^1([0,\infty);\dot {FB}^{3-\frac{3}{p}}_{p,r})}\\
&\leq C\|u\|_X\|v\|_X.
\endaligned
$$
Thus, combining these estimates with Lemma \ref{l3.2} yields
$$
\|\Phi(u)\|_X\leq C\|u_0\|_{\dot {FB}^{2-\frac{3}{p}}_{p,r}}+ 4C\ve^2,
$$
as well as
$$
\|\Phi(u)-\Phi(v)\|_X\leq C(\|u\|_X+\|v\|_X)\|u-v\|_X.
$$
Choosing now $\ve\leq\frac{1}{8C}$, for every $u_0\in\dot {FB}^{2-\frac{3}{p}}_{p,r}(\R^3)$
with $\|u_0\|_{\dot {FB}^{2-\frac{3}{p}}_{p,r}} \leq
\frac{\ve}{C}$, we finally obtain
$$
\|\Phi(u)\|_X \leq 2\ve \mbox{ and}
$$
$$\|\Phi(u)-\Phi(v)\|_X \leq \frac{1}{2}\|u-v\|_X.
$$
Applying Proposition \ref{t3.1} to the given situation completes the proof of
Theorem \ref{t1.1}.

\vn
\section{Global existence for non-small data in $L^p_\sigma(\R^2)$ }

In this section we consider equation \eqref{e1.1} in the two-dimensional setting
and in the case where the initial data $u_0$ belong to $L^p_\sigma(\R^2)$ for $p>2$.
To this end, we note first that  the equations of
Navier-Stokes with Coriolis force are equivalent to the Navier-Stokes equations with
linearly growing initial data. Indeed, we may rewrite  equation \eqref{e1.1} for a
two dimensional rotating fluid  as
\begin{equation}\label{e4.1}
 \left\{
 \begin{array}{rlll}
 \partial_tu-\Delta u+u\cdot\nabla u-2Mu+\nabla \pi&=&0, \ \ &y\in\R^2,\ t>0,\\
  \hbox{div }u&=&0, \ \ &y\in\R^2,\ t>0,\\
  u(0)&=&u_0,  \ \ &y\in\R^2,
   \end{array}
  \right.
\end{equation}
where $M$ is given by
\begin{gather*}M=-\frac{\Omega}{2}\left(
\begin{matrix}
0 & -1   \\
1 &  0   \\
\end{matrix}\right).
\end{gather*}
Then, by the change of variables $x=e^{-tM}y$ and by setting
$$
v(t,x):=e^{-tM}u(t,e^{tM}x), \qquad q(t,x):=\pi(t,e^{tM}x),
$$
we obtain the following set of equations for $v$
\begin{equation}\label{e4.2}
 \left\{
 \begin{array}{rlll}
 \partial_tv-\Delta v+v\cdot\nabla v-Mx\cdot\nabla v-Mv+\nabla q&=&0, \ \ &x\in\R^2,\ t>0,\\
  \hbox{div }v&=&0, \ \ &x\in\R^2,\ t>0,\\
  v(0)&=&u_0,  \ \ &x\in\R^2.
   \end{array}
  \right.
\end{equation}
These are the usual equations of Navier-Stokes with  linearly growing initial data.
Indeed, setting $U=v-Mx$, we have
\begin{equation}\label{e4.3}
 \left\{
 \begin{array}{rlll}
 \partial_tU-\Delta U+U\cdot\nabla U+\nabla \tilde{\pi}&=&0, \ \ &x\in\R^2,\ t>0,\\
  \hbox{div }U&=&0, \ \ &x\in\R^2,\ t>0,\\
  U(0)&=&u_0-Mx,  \ \ &x\in\R^2,
   \end{array}
  \right.
\end{equation}
with $\nabla \tilde{\pi}=\nabla q -MMx$.
For initial data $u_0\in L_\sigma^p(\R^2)$, it was shown in  \cite{HS05} that there exists
 a unique, local  mild solution $v$ to equation (\ref{e4.2}) in the space
$C([0,T_0);L_\sigma^p(\R^2))$, where $2\leq p<\infty$.
We note that if $u_0\in L_\sigma^p(\R^2)$, Theorem 2.1 in  \cite{HS05} implies that
$t^{\frac{1}{2}}\nabla v\in C([0,T_0);L^p(\R^2))$. Thus there exists $t_1\in (0,T_0)$
such that $\nabla v(t_1)\in L^p(\R^2)$ which implies that
$\mathrm{rot}\ v(t_1)\in L^p(\R^2)$. Hence, in order to prove
Theorem \ref{t1.3}, it suffices to show an a priori estimate of the following form. In the sequel, we set $w:=\mathrm{rot}\ v$.

\vn
\begin{proposition}\label{t4.1}
Let  $2\leq p<\infty$ and $v(t_1)\in L_\sigma^p(\R^2)$ such that $\textrm{rot }v(t_1) \in L^p(\R^2)$ for some $t_1\in (0,T_0)$.
Let $v$ be the mild solution of (\ref{e4.2}). Then there exists a constant $C>0$ such that
$$
\|v(t)\|_{L^p}\leq C \|v(t_1)\|_{L^p}\exp\Big(Ct\|w(t_1)\|_{L^p}\Big),\quad t>t_1,
$$
where $w(t_1)=\mathrm{rot}\ v(t_1)$.
\end{proposition}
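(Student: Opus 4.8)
The plan is to pass to the vorticity formulation, derive a maximum-principle bound on $w=\mathrm{rot}\,v$ in $L^p(\R^2)$, and then run an $L^p$ energy estimate on $v$ itself in which all the Coriolis contributions cancel and the nonlinearity is controlled through the vorticity bound. First I would apply $\mathrm{rot}$ to equation \eqref{e4.2}. Since $v$ and $Mx$ are both divergence free and the two-dimensional vortex-stretching term vanishes, a direct computation gives the scalar transport-diffusion equation
\[
\partial_t w-\Delta w+(v-Mx)\cdot\nabla w=0,
\]
where one uses that $\mathrm{rot}(Mv)=0$ and $\mathrm{rot}(Mx\cdot\nabla v)=Mx\cdot\nabla w$, both because $M$ is skew and $\mathrm{div}(Mx)=0$. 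The drift $b:=v-Mx$ is divergence free, so testing with $|w|^{p-2}w$ makes the transport term integrate to zero and the diffusion term non-negative; hence $\tfrac1p\frac{d}{dt}\|w\|_{L^p}^p\le0$ and
\[
\|w(t)\|_{L^p}\le\|w(t_1)\|_{L^p},\qquad t\ge t_1.
\]
This is the global-in-time bound that the two-dimensional structure buys us.

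Next I would recover quantitative control of $v$ from $w$. Because $v$ is divergence free with $\mathrm{rot}\,v=w$, the Biot--Savart identity $\Delta v=\nabla^\perp w$ expresses $\nabla v$ through Riesz transforms of $w$, so \cz\ theory yields $\|\nabla v\|_{L^p}\le C\|w\|_{L^p}$, and for $p>2$ the Sobolev embedding $W^{1,p}(\R^2)\hookrightarrow L^\infty(\R^2)$ gives $\|v\|_{L^\infty}\le C(\|v\|_{L^p}+\|w\|_{L^p})$. Combined with the previous step, these are uniform in $t$ up to the factor $\|v(t)\|_{L^p}$.

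The heart of the proof is then an $L^p$ energy estimate for $v$. Testing \eqref{e4.2} with $|v|^{p-2}v$, the convective term $v\cdot\nabla v$, the term $Mx\cdot\nabla v$, and the term $Mv$ all drop: the first two because their drifts are divergence free (so each equals a divergence of $\tfrac1p|v|^p$), and the last because $M$ is skew, so $Mv\cdot v\equiv0$. Only the pressure survives. Writing $v\cdot\nabla v=\nabla(\tfrac12|v|^2)+w\,v^\perp$ with $v^\perp=(-v_2,v_1)$, I would show that the Coriolis part of the pressure is exactly $-\Omega v^\perp$ — indeed $\Delta q$ contains the term $\Omega w$, and the Biot--Savart identity gives $\nabla\Delta^{-1}w=-v^\perp$ — which is pointwise orthogonal to $v$ and hence invisible to the test function. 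What remains is the genuinely nonlinear contribution $-\int \mathbb{P}(w\,v^\perp)\cdot|v|^{p-2}v$, which is bounded by $C\|w\,v^\perp\|_{L^p}\|v\|_{L^p}^{p-1}\le C\|w\|_{L^p}\|v\|_{L^\infty}\|v\|_{L^p}^{p-1}$. Inserting the bounds of the previous paragraph, I arrive at a differential inequality of the form
\[
\frac{d}{dt}\|v\|_{L^p}\le C\|w\|_{L^p}\bigl(\|v\|_{L^p}+\|w\|_{L^p}\bigr),
\]
and since $\|w(t)\|_{L^p}\le\|w(t_1)\|_{L^p}$ is constant in the exponent, a Gronwall argument yields an exponential-in-$t$ bound of the claimed type, with growth rate governed by $\|w(t_1)\|_{L^p}$.

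I expect the main obstacle to be the pressure term, which for $p\neq2$ does not vanish as it would in the $L^2$ energy estimate. The decisive points are the decomposition of the convective term into a gradient plus $w\,v^\perp$, the cancellation $\nabla\Delta^{-1}w=-v^\perp$ that removes the linearly growing Coriolis contribution (the same growth responsible for the unfavourable $\Omega^2t^2$ estimate mentioned in the introduction), and the fact that the surviving nonlinearity is tamed precisely by the non-increasing $L^p$ norm of the vorticity together with the two-dimensional Sobolev embedding. A secondary technical point is to justify the integrations by parts in the vorticity estimate despite the linear growth of the drift $Mx$; this rests on $\mathrm{div}(Mx)=0$ together with the decay and regularity of the mild solution coming from \cite{HS05}.
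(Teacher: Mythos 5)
Your argument is essentially correct but follows a genuinely different route from the paper. Both proofs rest on the same two pillars: the vorticity equation \eqref{e4.5} with its divergence-free drift $v-Mx$, giving $\|w(t)\|_{L^p}\leq C\|w(t_1)\|_{L^p}$, and the Biot--Savart/\cz\ bound $\|\nabla v\|_{L^p}\leq C\|w\|_{L^p}$; both close with Gronwall. Where you diverge is in how $\|v\|_{L^p}$ itself is propagated. The paper stays entirely within the mild formulation: it writes $v$ via the Duhamel formula for the Ornstein--Uhlenbeck-type generator $A$ and invokes the $L^{p/2}\to L^p$ smoothing estimate of Proposition 3.4 in \cite{HS05}, so the pressure never appears and the price is the semigroup machinery for an operator with unbounded drift. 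You instead run a direct $L^p$ energy estimate on \eqref{e4.2}, which forces you to confront the pressure for $p\neq 2$; your decomposition $v\cdot\nabla v=\nabla(\tfrac12|v|^2)+w\,v^\perp$ together with the identities $\nabla\Delta^{-1}w=-v^\perp$ and $v^\perp\cdot v=0$ does correctly kill both the Coriolis contribution and the gradient part (the latter pairs with $|v|^{p-2}v$ to give $\tfrac{p-2}{2p}\int v\cdot\nabla|v|^p=0$ by $\dv v=0$), leaving only $\mathbb{P}(w\,v^\perp)$, which you control as claimed. What your route buys is independence from the semigroup estimates of \cite{HS05} at the cost of more delicate pointwise cancellations and of justifying integrations by parts against the linearly growing drift.

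Two caveats you should make explicit. First, your use of $W^{1,p}(\R^2)\hookrightarrow L^\infty(\R^2)$ genuinely requires $p>2$, while the proposition covers $p=2$; that endpoint must be treated separately (there the pressure term vanishes outright and the standard $L^2$ estimate suffices), whereas the paper's Duhamel argument handles $2\leq p<\infty$ uniformly. Second, your differential inequality $\frac{d}{dt}\|v\|_{L^p}\leq C\|w\|_{L^p}(\|v\|_{L^p}+\|w\|_{L^p})$ integrates to a bound of the form $\bigl(\|v(t_1)\|_{L^p}+Ct\|w(t_1)\|_{L^p}^2\bigr)\exp(Ct\|w(t_1)\|_{L^p})$, which carries an extra additive term relative to the stated estimate; this is harmless for the intended application (global continuation of the local solution), but it is not literally the inequality of Proposition \ref{t4.1}.
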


\begin{proof}
Consider the operator $A$ in $L^p_\sigma(\R^2)$ given by
$$
Au:= -\Delta u - <M\cdot,\nabla u> + Mu
$$
equipped with the domain $D(A)=\{u\in W^{2,p}(\R^2):<M\cdot,\nabla u> \in L^p(\R^2)\}$.
By the results in \cite{HS05}, the mild solution of \eqref{e4.2} is represented by
$$
v(t)=e^{-tA}v(t_1)-\int_{t_1}^te^{-(t-s)A}\mathbb{P}(v\cdot\nabla v)(s)ds+
2\int_{t_1}^te^{-(t-s)A}\mathbb{P}(Mv)(s)ds,
$$
for $t>t_1$. Applying Proposition 3.4 in \cite{HS05} yields
\begin{equation}\label{e4.4}
\|e^{-(t-s)A}\mathbb{P}(v\cdot\nabla v)(s)\|_{L^p}
\leq  \frac{C}{(t-s)^{\frac{1}{p}}}\cdot\|v\cdot\nabla v(s)\|_{L^{\frac{p}{2}}}
\leq  \frac{C}{(t-s)^{\frac{1}{p}}}\cdot\|v(s)\|_{L^q}\cdot\|\nabla v(s)\|_{L^p},
\quad t>s>t_1.
\end{equation}
Employing the inequality $\|\nabla v(s)\|_{L^p}\leq \frac{p^2}{p-1}\|w(s)\|_{L^p}$
(see e.g. \cite{BCD11}), we see that
$$
\|v(t)\|_{L^p}\leq C\|v(t_1)\|_{L^p}+ C \int_{t_1}^t\frac{C}{(t-s)^{\frac{1}{q}}}
\cdot\|v(s)\|_{L^p}\cdot\|w(s)\|_{L^p}ds+C\int_{t_1}^t\|v(s)\|_{L^p}ds, \quad t>t_1.
$$
Next, applying curl  to equation \eqref{e4.2}, we verify that the vorticity $w= \mbox{rot }v$
satisfies the equation
\begin{equation}\label{e4.5}
\left\{
 \begin{array}{rlll}
 \partial_tw-\Delta w+v\cdot\nabla w-Mx\cdot\nabla w&=&0, \ \ &x\in\R^2,\ t>0,\\
 w(0)&=&\hbox{rot}\ u_0. &
   \end{array}
  \right.
\end{equation}
A standard energy estimate allows us to show that
$$
\|w(t)\|_{L^p}\leq C \|w(t_1)\|_{L^p}, \quad t>t_1.
$$
Hence, we have
$$
\|v(t)\|_{L^p}\leq C\|v(t_1)\|_{L^p}+C\|w(t_1)\|_{L^p}\int_{t_1}^t
\Big(\frac{1}{(t-s)^{\frac{1}{p}}}+1\Big)\|v(s)\|_{L^p}ds, \quad t>t_1.
$$
Finally, Gronwall's inequality yields the  desired estimate. This finishes the proof of
Proposition \ref{t4.1}.
\end{proof}

\vn
By Proposition \ref{t4.1}, we obtain a unique, global solution $\widetilde{v}$ of \eqref{e4.2}
on $[t_1,\infty)$ for the  initial data $v(t_1)$. A uniqueness argument ensures that
$v(t)=\widetilde{v}(t)$ on $[t_1,T_0)$. Therefore,
the local solution $v$ on $[0,T_0)$ can be continued globally. This finishes the proof of
Theorem \ref{t1.3}.

\vn 
{\bf Acknowledgments} The main part of this work was carried
out while the first and the second authors are visiting the
Department of Mathematics at the  Technical University of Darmstadt. They
would express their gratitude to Prof. Matthias Hieber for his kind
hospitality and the Deutsche Forschungsgemeinschaft (DFG) for financial support.

\end{document}